\newcommand{\ba}{\begin{eqnarray}}
\newcommand{\ea}{\end{eqnarray}}
\newtheorem{thm}{Theorem}[section]
\newtheorem{conjecture}{Conjecture}
\newtheorem{theorem}[thm]{Theorem}
\newtheorem{claim}[thm]{Claim}
\newcommand*{\rom}[1]{\expandafter\@slowromancap\romannumeral #1@}
\date{}
\begin{document} 
\author{Batoul Tarhini$^{1,2}$} \footnotetext[1]{KALMA Laboratory, Faculty of Sciences, Lebanese University, Beirut, Lebanon.}
     \footnotetext[2]{LIB Laboratory, University of Burgundy, Dijon, France.}
\title{About the existence of oriented paths with three blocks}
\maketitle
\bibliographystyle{alpha}
\begin{center}\textbf{Abstract}
\end{center}
A path $P(k_{1},k_{2},k_{3})$ is an oriented path consisting of $k_{1}$ forward arcs, followed by $k_{2}$ backward arcs, and then by $k_{3}$ forward arcs. We prove the existence of any oriented path of length $n-1$ with three blocks having the middle block of length one in any $(2n-3)$-chromatic digraph, which is an improvement of the latest bound reached in this case.\\ Concerning the general case of paths with three blocks, we prove, after partitioning the problem into three cases according to the values of $k_{1}$,$k_{2}$ and $k_{3}$,  that the chromatic number of digraphs containing no $P(k_{1},k_{2},k_{3})$ of length $n-1$ is bounded above by $2(n-1)+k_{3}$, $2(n-1)+k_{2}+k_{3}-k_{1}$ and $2(n+k_{2}-1)-k_{1}$ in the three cases respectively.

\paragraph{Keywords:} Oriented tree, oriented path, chromatic number, final forest.
\section{\textbf{Introduction}}
\pagenumbering{arabic}
In this paper, all digraphs considered are simple; there are no loops, no multiple edges and no circuits of length $2$. 
A digraph $D$ is defined by its vertex set $V(D)$ and its arc set $E(D)$. We denote by $v(D)$ the cardinality of $V(D)$. $D^{c}$ is defined to be the digraph obtained from $D$ by reversing the orientation of all the arcs in $D$. By ignoring the orientation of all arcs of a digraph $D$, we obtain the graph $G[D]$. Let $K\subseteq V(D)$; we denote by $D[K]$ the induced restriction of $D$ to $K$. We say that a digraph $D$ contains $H$ if $H$ is isomorphic to a subdigraph of $D$. If $D$ does not contain $H$, we say that $D$ is $H-$free. Let $H$ be a subdigraph of a digraph $D$, for every $v\in V(D)$, we define $N^{+}_{H}(v)=\{u\in H$ such that $(v,u)\in E(D)\}$ and $N^{-}_{H}(v)=\{u\in H$ such that $(u,v)\in E(D)\}$. For simplicity, $N^{+}_{D}(v)$ and $N^{-}_{D}(v)$ are written $N^{+}(v)$ and $N^{-}(v)$ respectively. \\
An \textit{oriented path} is obtained from a path by giving an orientation of each of its edges. If $P=x_{1}x_{2}\dots x_{n}$ is an oriented path, then the subpath $x_{i}x_{i+1}\dots x_{j}$ is denoted by $P_{[x_{i},x_{j}]}$ for every $1 \leq i < j \leq n$. A \textit{block} of an oriented path $P$ is defined to be a  maximal directed subpath of $P$. We denote by  $P(k_{1}, k_{2},\dots , k_{m})$ the oriented path formed of $n$ blocks of lengths $k_{1}, k_{2}, ..., k_{m-1}$ and $k_{m}$ respectively.\\ An \textit{oriented tree} is an orientation of a tree.
 An \textit {out-tree} (respectively, \textit{in-tree}) is an oriented tree in which all the vertices has indegree (respectively, outdegree) at most 1. This implies that in  an out-tree (respectively, in-tree) there exists exactly one vertex of indegree (respectively, outdegree) zero, and it is called the \textit{root}.\\ An \textit{outforest} (respectively, \textit{inforest}) is an oriented forest in which its connected components are out-trees (respectively, in-trees). The unique directed path in $F$ joining $u$ with the root is denoted by $P_{F}(u)$. If $P_{F}(u_{k})=u_{1}\dots u_{k-1} u_{k}$ where $F$ is a spanning outforest, then the vertex $u_{k-1}$ is denoted by ${N_{F}}^{-}(u_{k})$. We denote by $l_{F}(u)$, the level of a vertex $u \in F$ that is the order of the path $P_{F}(u)$. $L_{i}(F)$ is defined to be the set containing all the vertices $u\in V(F)$ such that $ l_{F}(u)=i$. \\We say that an arc $(u,v)\in E(D)$ is a forward arc with respect to an outforest (respectively, inforest) $F$ of $D$ if $l_{F} (u) < l_{F} (v)$ (respectively, $l_{F} (u) > l_{F} (v)$), otherwise we call it a backward arc.
  A \textit{spanning outforest} (respectively, \textit{spanning inforest}) $F$ of a digraph $D$ is said to be \textit{final outforest} (respectively, \textit{final inforest}) of $D$ if and only if for every $(u,v)$ backward arc with respect to $F$, $F$ contains a $vu$ (respectively, $uv$)-directed path. \\ It is clearly noticed that in a final outforest (respectively, inforest) $F$, $L_{i}(F)$ is stable for every $i$.\\ \\
Digraphs contained in every $n$-chromatic digraph are called $n$-universal.\\
 In 2015, El Sahili \cite{ce} proposed the following conjecture:\\ \begin{conjecture} \label{111} For $n\geq 8$, every oriented path of order $n$ is $n$-universal. \end{conjecture}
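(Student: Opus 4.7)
The plan is to attempt the conjecture by induction on the number $m$ of blocks of the target oriented path $P$ of order $n$, using the final outforest/inforest machinery recalled in the preliminaries as the main structural tool. The base case $m=1$ is the classical Gallai--Roy theorem: an $n$-chromatic digraph contains a directed path of order $n$, which is exactly a single-block path. The case $m=2$ (antidirected paths) is known for $n$ sufficiently large. The heart of the conjecture is therefore $m\geq 3$, where the three-block results stated in the abstract of this paper enter as the first nontrivial case of a secondary induction, and the real task becomes extending the three-block embedding argument to an arbitrary block pattern.

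Fix an $n$-chromatic digraph $D$ and suppose, for contradiction, that some oriented path $P=P(k_1,k_2,\dots,k_m)$ of order $n$ is not a subdigraph of $D$. I would build a final outforest $F$ of $D$ and exploit the fact, recalled in the excerpt, that each level $L_i(F)$ is a stable set. Since stability prevents the levels from themselves providing chromatic content, $F$ must have at least $n$ nonempty levels; otherwise one could properly $(n-1)$-color $D$ by painting each level with its own color and handling backward arcs through the defining property of a final outforest. This long level structure is precisely what is needed in order to accommodate the initial $k_1$ forward arcs of $P$.

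To complete the embedding I would alternate between following arcs of $F$ (to realise forward blocks) and invoking the final-outforest property to locate backward arcs together with the directed path in $F$ that closes them up (to realise reversed blocks). At each block transition, after having embedded the first $i$ blocks with endpoint $v$, one must produce a neighbour of $v$ at the correct level of $F$ which avoids all vertices already used. For $m=3$ with middle block of length one this is essentially what the improved bound $2n-3$ in this paper achieves, and for larger $m$ one would iterate the same level-shift bookkeeping, alternating between a final outforest and a final inforest depending on whether the next block to embed is forward or backward, while keeping a reservoir of unused vertices at every level.

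The chief obstacle, and the reason the conjecture remains open, is the gap between the chromatic bounds of the form $2(n-1)+k_3$, $2(n-1)+k_2+k_3-k_1$ and $2(n+k_2-1)-k_1$ obtained here and the optimal bound $n$ demanded by Conjecture~\ref{111}. Closing this gap requires a considerably finer use of the structure of $D$ than the level-counting sketched above: I would expect that one must choose a final outforest and a final inforest simultaneously, calibrated so that forward and backward blocks are treated on an equal footing, and combine this with a greedy or probabilistic argument that prevents block-to-block vertex reuse. The threshold $n\geq 8$ also suggests that a short list of small sporadic cases has to be handled by hand, so a successful proof will combine a clean general embedding scheme with a careful analysis of the boundary.
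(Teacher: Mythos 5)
This statement is a conjecture, not a theorem: the paper does not prove it, and neither do you. What you have written is a research plan together with an honest admission that it does not work --- you say explicitly that ``the conjecture remains open'' and that closing the gap between the known bounds and the target bound $n$ ``requires a considerably finer use of the structure of $D$.'' That admission is correct, but it means there is no proof here to evaluate: the entire argument from the second paragraph onward is a sketch whose crucial step (producing, at each block transition, a neighbour at the correct level avoiding all previously used vertices) is precisely the step nobody knows how to carry out with only $n$ colours available. The level-counting you describe does correctly yield Gallai--Roy for a single block, and the alternation between final outforests and inforests is indeed the engine behind the partial results in this paper, but iterating it costs roughly a factor of $2$ (or worse) in the chromatic bound at each stage, which is why the paper only reaches bounds of the form $2n-3$, $2(n-1)+k_3$, etc., rather than $n$.

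A few smaller inaccuracies are worth flagging so they do not propagate: two-block paths are not the same as antidirected paths (the latter have every block of length $1$), and the two-block case is known for all $n\geq 4$, not merely for $n$ sufficiently large. Also, the induction on the number of blocks that you propose has no known mechanism for the inductive step --- the three-block results here are not obtained by reducing to the two-block case, but by a direct decomposition of the digraph into directed paths, their neighbourhoods, and a residual piece of small chromatic number. If you want to contribute to this problem, the realistic target is the kind of improvement the paper actually achieves: lowering the multiplicative or additive constants in the chromatic bound for a fixed small number of blocks, not resolving Conjecture~\ref{111} itself.
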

 In 1980, Burr \cite{jhp} conjectured the following:\\ \begin{conjecture} \label{22} Every oriented tree of order $n$ is $(2n-2)$-universal. \end{conjecture}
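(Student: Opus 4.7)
The plan is to approach Conjecture~\ref{22} by combining the final outforest machinery introduced in the preliminaries with a leaf-removal induction on the oriented tree. Let $T$ be an oriented tree of order $n$ and $D$ a digraph with $\chi(D) \geq 2n-2$; the goal is to produce an embedding $\varphi : V(T) \to V(D)$ preserving arcs and their orientations.

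First, I would fix a final outforest $F$ of $D$, allowing a switch to $D^c$ whenever that orientation is more convenient for the leaf we remove. Since every $L_i(F)$ is stable and $\chi(D) \geq 2n-2$, there must be at least $2n-2$ non-empty levels, giving ample vertical room to thread forward arcs of $T$. The final property is the key structural lever: every backward arc $(u,v)$ of $D$ drags along a $vu$-directed path in $F$, so backward arcs are never "free"---they come bundled with an embedded branch that can be re-used to carry backward blocks of $T$.

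Next, I would set up induction on $n$: pick a leaf $\ell$ of $T$ with unique neighbor $p$, and apply the inductive hypothesis to $T - \ell$, which has order $n-1$ and therefore embeds in any $(2n-4)$-chromatic digraph. To propagate the induction, the statement must be strengthened: rather than producing any embedding of $T-\ell$, it should produce one in which $\varphi(p)$ lies in a prescribed range of levels of $F$ and has a controlled out- or in-neighborhood, depending on the orientation of the arc $p\ell$. This kind of reinforcement is standard for paths, where one only tracks one endpoint, but is substantially harder for trees because several branching vertices of $T-\ell$ must be pinned to prescribed levels simultaneously, and an unlucky earlier choice can block the later attachment.

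The main obstacle, and the reason Conjecture~\ref{22} has resisted proof since 1980, lies precisely in the attachment step. Chromatic excess supplies room globally but not locally at $\varphi(p)$, and a reinforced inductive statement strong enough to guarantee a previously unused arc of the correct direction out of $\varphi(p)$ is exactly what nobody has managed to formulate. A realistic partial plan is to split on whether $p\ell$ is forward or backward with respect to $F$, use the topmost levels together with the final-property paths to exhibit the required arc in the backward case, and appeal to a level-shifting argument on the last stable set in the forward case. I would not expect this plan alone to yield a full proof of Conjecture~\ref{22}; any complete resolution is likely to require a genuinely new structural idea, and as suggested by the abstract, the paper's contributions appear to be aimed at specific families of oriented paths with three blocks rather than arbitrary trees.
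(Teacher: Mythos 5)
The statement you are asked about is Burr's conjecture from 1980; the paper states it as Conjecture~\ref{22} precisely because it is open, and offers no proof of it (nor does it claim to). The paper's actual contribution in this direction is Theorem~\ref{m}, which establishes the $(2n-3)$-chromatic bound for the special family $P(k,1,l)$ and thereby ``confirms that the path $P(k,1,l)$ satisfies Conjecture~\ref{22}''; arbitrary oriented trees are never addressed. So there is no proof in the paper to compare yours against, and your own closing admission --- that the plan is not expected to yield a full proof --- is accurate and appropriate.

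As for the sketch itself, the genuine gap is exactly where you locate it, but it is worth being precise about why it is fatal rather than merely inconvenient. Leaf-removal induction requires a strengthened hypothesis of the form ``$T-\ell$ embeds with $\varphi(p)$ retaining an unused out-neighbor (or in-neighbor, as the orientation of $p\ell$ demands),'' and chromatic number gives no such local degree control: $\varphi(p)$ may have every neighbor of the required orientation already consumed by the image of $T-\ell$, and level-counting in a final outforest does not repair this, since the final property only supplies directed paths toward roots inside $F$, not spare arcs leaving an arbitrary prescribed vertex. Your induction also does not close numerically: to pass from ``order $n-1$ embeds in every $(2n-4)$-chromatic digraph'' to ``order $n$ embeds in every $(2n-2)$-chromatic digraph'' you must exhibit a reserved structure for $\ell$ whose removal costs at most $2$ in chromatic number, and you never specify that structure. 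These are the same obstructions that have kept both Burr's conjecture and Sumner's conjecture open. Your proposal is a fair description of the difficulty, but it is a research plan, not a proof, and should not be spliced in as one.
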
 This is a generalization of Sumner's conjecture which states that every oriented tree of order $n$ is contained in every tournament of order $2n-2$.\\
 Our interests are directed towards studying the universality of oriented paths which are, clearly, special types of oriented trees.\\ Before showing our result, we will state main previous results in this field.\\ Regarding oriented paths in general, Burr \cite{jhp} showed that any oriented path of length $n-1$ is $(n-1)^2$-universal.\\  In the case of tournaments, Havet and Thomasse \cite{fdo} showed that except for three particular cases, any tournament of order $n$ contains any oriented path of length $n-1$.
 Gallai \cite {khj} and Roy \cite{dar} proved that any directed path of length $n-1$ is $n$-universal.\\
El-Sahili \cite{kg} conjectured that every path of order $n \geq 4$ with two blocks is $n$-universal.\\
L. Addario-Berry et al. \cite{po} used strongly connected digraphs and a theorem of Bondy \cite{hjk} to prove the correctness of El Sahili's conjecture.
\\The case of three blocks is then our concern. M. Mourtada et al. proved in  \cite{sss} that the path $P(n-3,1,1)$ is $(n+1)$-universal. Tarhini et al. proved the existence of any $P(k,1,l)$ of length $n-1$ in any $(2n+2)$-chromatic digraph and that if $D$ is an $n$-chromatic digraph containing a Hamiltonian directed path, then it contains any $P(k,1,l)$ of length $n-1$.\\ In this paper, we improve the bound reached in  \cite{mb}, so that we prove the following:\\ \\
Let $D$ be a $(2n-3)$-chromatic digraph; then $D$ contains any path $P(k,1,l)$ of length $n-1$, with $n\geq 7$.\\
This confirms that the path $P(k,1,l)$ satisfies Conjecture \ref{22}. \\ \\
 Concerning the general case of three blocks paths $P(k,l,r)$, we partition the problem into three cases according to the values of $k,$ $l$ and $r$. We establish a linear bound for the chromatic number
 which is $2(n-1)+r$, $2(n-1)+l+r-k$ and $2(n+l-1)-k$ in the three cases of the problem respectively.
The reached bound is at most $3n-6$ for two cases of the problem and $4n-13$ for the remaining case in its worst scenario, and so it is the first linear bound that hits $3n$ and it is an improvement of the bound reached by El Joubbeh \cite{Mjb}. \\ \\ 
This paper is organized as follows:\\ $\bullet$ In Section \ref{2}, we deal with the path $P(k,1,l)$. \\ $\bullet$ In Section \ref{3}, we deal with the general case of paths with three blocks.\\ \\
Remark that if we prove that an $f(n)$-chromatic digraph $D$ contains any path $P(k_{1},k_{2},k_{3})$ for $k_{1}\leq k_{3}$, then it is trivial that $D^{c}$ contains the path $P(k_{1},k_{2},k_{3})$ because it has the same chromatic number as $D$. Moreover, the existence of a path $P(k_{1},k_{2},k_{3})$, for $k_{1}\leq k_{3}$, in the digraph $D^{c}$ results in the existence of the path  $P(k_{3},k_{2},k_{1})$ in $D$. So to prove the existence of any path  $P(k_{1},k_{2},k_{3})$ in a digraph $D$, it is sufficient to prove it for $k_{1}\leq k_{3}$ and the case when $k_{1}> k_{3}$ is deduced by using the digraph $D^{c}$. Therefore, in what follows it is sufficient to deal only with the case when $k_{1}\leq k_{3}$.

\section{The existence of the path $P(k,1,l)$} \label{2}
\subsection{An overview of the proof}
In this section we give the reader the intuition how the proof works, what are the most important steps of the proof and what is the new technique used in this setting. In the cases under study, the followed technique has served us in improving the latest previous bounds of the chromatic number. We also believe that it may be useful in dealing with different types of oriented paths. This technique is followed in the two theorems that we prove, but with some differences that have advantages when the second block is of length $1$ (Theorem \ref{m}). 
First, we start by defining a sequence $\{D_{i},i\in I\}$ of subdigraphs of the digraph $D$, by taking out at each step a directed path $P_{i}$ with all its neighbors $N_{i}$ where the length of the path is precised according to the case of the path we are studying its existence. Let $P$ and $N$ be the union of the paths $P_{i}$ and their neighbors $N_{i}$ respectively. The next step is partitioning the set of neighbors $N$ into subsets according to the localization of their neighbors on the the paths $P_{i}$. We suppose that the digraph is $P(k,l,r)$-free and start assigning the colors to its vertices. In the coloring process, we have benefit that there are no edges between one path and another, and that we know the chromatic number bound of the remaining subdigraph since it contains no directed path of the specified length. When dealing with the subsets forming the set of neighbors, we use the concept of final outforest and inforest, and proceed the coloring of each subset according to its definition based on the localization of its neighbors. At the end, we reach a contradiction with the given chromatic number and this completes the proof.

\subsection{The proof}
\textbf{\begin{theorem}\label{m} Let $D$ be a $(2n-3)$-chromatic digraph; then $D$ contains any path $P(k,1,l)$ of length $n-1$, with $n\geq 7$. \end{theorem}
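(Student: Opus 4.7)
Following the overview, my approach is to iteratively extract directed paths from $D$, partition their neighborhoods in a structured way, and derive a contradiction by counting colors. By the symmetry remark at the end of the introduction, I assume without loss of generality that $k \leq l$, so $l \geq \lceil (n-2)/2 \rceil$. Starting from $D_0 = D$, while $\chi(D_i) \geq n$, I invoke the Gallai--Roy theorem to extract a directed path $P_i$ on $n$ vertices in $D_i$; let $N_i$ be the set of vertices of $V(D_i) \setminus V(P_i)$ having at least one arc to or from $V(P_i)$, and set $D_{i+1} = D_i \setminus (V(P_i) \cup N_i)$. The process terminates at a stage where $\chi(D_{\mathrm{last}}) \leq n-1$, and, crucially, the arcs of $D$ split cleanly: there are no arcs between $V(P_i)$ and $V(P_j)$ for $i \neq j$.

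\textbf{Partitioning.} Write $P_i = u_0^i u_1^i \cdots u_{n-1}^i$ and assume for contradiction that $D$ is $P(k,1,l)$-free. I would partition each $N_i$ according to the positions where its vertices attach to $P_i$: the key subsets are $A_i = \{v \in N_i : v \to u_j^i \text{ for some } j \geq k\}$ and $B_i = \{v \in N_i : u_j^i \to v \text{ for some } j \leq n-1-l\}$. A vertex $v \in A_i$ already witnesses the first forward block together with the single backward arc of a $P(k,1,l)$: to close up the path one only needs an out-neighbor of $v$, distinct from $u_j^i$, starting a directed path of length $l$ vertex-disjoint from $u_{j-k}^i \cdots u_j^i$. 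The $P(k,1,l)$-freeness of $D$ therefore severely restricts the out-neighborhoods of vertices in $A_i$, and a dual statement holds for $B_i$.

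\textbf{Coloring and contradiction.} Using these restrictions I would build a proper coloring of $D$ with at most $2n-4$ colors, contradicting $\chi(D) = 2n-3$. The coloring is assembled from: (a) two colors alternating on each $P_i$; (b) at most $n-1$ colors on $D_{\mathrm{last}}$, since Gallai--Roy applied to the leftover digraph (which has no directed path on $n$ vertices) gives $\chi(D_{\mathrm{last}}) \leq n-1$; (c) on each piece of the partition of each $N_i$, a final outforest or final inforest, whose levels are stable by the remark in the introduction; the $P(k,1,l)$-freeness forces this forest to have bounded depth, so only a handful of fresh colors are needed per piece while the bulk of colors can be reused from the $D_{\mathrm{last}}$ palette. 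Since distinct $P_i$'s do not interact, the partial colorings merge consistently.

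\textbf{The main obstacle.} The crux is the depth bound for the final outforest on each piece of $N_i$ in step (c), and the bookkeeping that converts this bound into a total of at most $2n-4$ colors. The gain of one color over the earlier bound $2n-2$ should come precisely from exploiting that the middle block has length exactly $1$: a single backward arc forces any vertex of $A_i$ (or $B_i$) playing the role of the pivot $x_{k+1}$ in the definition of $P(k,1,l)$ to have a very constrained out-neighborhood inside $N_i \cup D_{\mathrm{last}}$, and this saving propagates through the level structure of the final forest to produce the improved bound $2n-3$. I expect the delicate counting at this final step, rather than any of the preceding extractions, to be the main technical hurdle.
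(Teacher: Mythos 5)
Your proposal reproduces the general strategy of the paper's overview (iterated extraction of directed paths with their neighborhoods, partition of the neighborhoods by attachment position, final forests, color reuse), but as written it is a plan rather than a proof, and two of its concrete choices would not survive contact with the counting. First, you extract directed paths on $n$ vertices via Gallai--Roy and then claim in step (a) that each $P_i$ can be colored with two alternating colors. That is false: you must properly color the \emph{induced} subdigraph $D[V(P_i)]$, which may contain many chords, so in general it needs up to $|V(P_i)|$ colors. The paper avoids this by extracting much shorter paths, of length $k+l-2=n-4$ (so $n-3$ vertices), which makes $\chi(D[V(P_i)])\leq n-3$ trivially and simultaneously ensures that $\chi(D_{\mathrm{last}})\leq n-4$; the length $k+l-2$ is also exactly calibrated so that a forest path of length $\geq k$ ending at a neighbor of $v_j^i$ concatenated with the tail $P_{i[v_j,v_{k+l-1}^i]}$ produces a $P(k,1,l)$. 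With your $n$-vertex paths these concatenation arguments do not come out to the right block lengths, and your color budget $2+(n-1)+(\text{``a handful''})$ is both unquantified and, once (a) is corrected, already at least $(n-1)+(\text{something})$, leaving you only $n-3$ colors for all of $N$ with no argument supplied.

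Second, everything that actually makes the bound $2n-4$ work is left as "I would" or "I expect": the proof that the high levels $A'$ and $B'$ of the final forests induce forests (hence are $2$-colorable), the claims that a vertex of $A_1$ has at most one neighbor in $B'$ and that a top-level vertex of $D_{\mathrm{last}}$ has at most one neighbor in $A_1\cup B'$ (these are what allow $A_1$, $B'$ and $L_{k+l-2}(D_r)$ to share two colors), and the final tally $(n-3)+(l+2)+(k-1)=2n-4$. These are precisely the nontrivial steps, and none of them is carried out or even stated precisely, so the proposal as it stands does not establish the theorem.
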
}
\begin{proof}
First we note that the case when $k=1$ or $l=1$ is treated in \cite{sss}, in which they proved that any $(n+1)$-chromatic digraph contains the path $P(n-3,1,1)$.
Thus, we assume in what follows that $k\neq 1$ and $l\neq 1$.
Suppose that $D$ is $P(k,1,l)$-free and, without loss of generality, suppose that $l\geq k$, and so $l\geq 3$.
Let: \begin{itemize}
\item $D_{1}=D$ 
 \item $D_{i+1}=D_{i}-(P_{i} \cup N(P_{i}))$ for every $1\leq i\leq r-1$, where :\begin{itemize} \item $P_{i}$ is a directed path of length $k+l-2$ in $D_{i}$. \item$N(P_{i})=\{v \in D_{i}-P_{i};$ $v$ has a neighbor in $P_{i}$\}.\item $r$ is the minimal value satisfying that $D_{r}$ contains no directed path of length $k+l-2$.\end{itemize} \end{itemize}
Let $D'=D[\displaystyle \bigcup _{i=1}^{r-1}V(P_{i})\cup D_{r}]$  and  $N=\displaystyle \bigcup _{i=1}^{r-1} N(P_{i})$.\\ By definition, we have: \begin{itemize} \item $v(P_{i})=k+l-1$, then $\chi(D[V(P_{i})])\leq k+l-1=n-3$ for every $1\leq i\leq r-1$.\item $D_{r}$ contains no directed path of length $k+l-2=n-4$, then $\chi(D_{r})\leq n-4$. \item$ P_{i+1}\in D_{i+1}=D_{i}-(P_{i}\cup N(P_{i}))$, then $uv\notin E(G[D])$ for every $u\in P_{i}$ and $v\in P_{j}$ with $i\neq j$ for all $1\leq i,j\leq r-1$, similarly for every $u\in D_{r}$ and $v\in P_{i}$.\end{itemize} So, $\chi(D')\leq n-3$. We color $D'$ by the set $\{1,2,...,k+l-1\}$.\\
Let $P_{i}=v^{i}_{1}\dots v^{i}_{k+l-1}$, and let $A $ and $B$ be two subsets of $N$ such that:\\$B=\displaystyle{\bigcup_{i=1}^{r-1}}  N^{+}(v_{k}^{i})\cup N(P_{i_{[v_{k+1}^{i},v^{i}_{k+l-1}]}})$ and $A=N-B$, \textit{i.e.} $A\subseteq \displaystyle{\bigcup_{i=1}^{r-1}}N(P_{i_{[v^{i}_{1},v_{k-1}^{i}]}})\cup N^{-}(v_{k}^{i}).$ \begin{claim} \label{A}: $\chi(D[A])\leq k+2$.\end{claim}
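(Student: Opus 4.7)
I would argue by contradiction, assuming $\chi(D[A])\ge k+3$. First I would take any final outforest $F$ of $D[A]$; its level sets $L_i(F)$ form a stable partition of $A$, hence a proper coloring, so the assumption forces $F$ to have at least $k+3$ levels. Picking $a\in L_{k+3}(F)$ and following the unique forest path from a root to $a$ gives a directed path $a_1\to a_2\to\cdots\to a_{k+3}=a$ of length $k+2$ inside $D[A]$. Because $a\in A\subseteq N$ and $a\notin B$, the vertex $a$ must have a neighbor on some $P_i$, and that neighbor must lie in $\{v^i_1,\ldots,v^i_{k-1}\}$ (in either direction) or be $v^i_k$ with $(a,v^i_k)\in E(D)$; the same applies to each $a_m$, since all lie in $A$.

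The main construction is to splice the forest path in $A$ with a subpath of $P_i$ so as to manufacture the forbidden $P(k,1,l)$. Suppose some $a_m$ with $k+1\le m\le k+3$ admits an in-neighbor $v^i_j$ on $P_i$ with $j\le k-1$. Then
$$a_{m-k}\to a_{m-k+1}\to\cdots\to a_m\leftarrow v^i_j\to v^i_{j+1}\to\cdots\to v^i_{j+l}$$
realises $P(k,1,l)$ inside $D$: the first block of length $k$ lives on the forest path (it exists because $m-k\ge 1$), the middle arc is $v^i_j\to a_m$ (the backward arc of the pattern), and the last block of length $l$ is a subpath of $P_i$ (it fits since $j+l\le k+l-1$). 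All vertices are distinct since $A\cap V(P_i)=\emptyset$ and each subfamily is itself a subpath of a path. This contradicts the $P(k,1,l)$-freeness of $D$.

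The hard part will be the remaining case, where none of $a_{k+1},a_{k+2},a_{k+3}$ admits an in-neighbor in $\{v^i_1,\ldots,v^i_{k-1}\}$ for any $i$, so that all their $P_i$-adjacencies are out-arcs directed into $\{v^i_1,\ldots,v^i_k\}$. Here I would reverse the roles: given an out-arc $(a,v^i_j)$, the vertex $v^i_j$ becomes a candidate pivot, since it receives from $v^i_{j-1}$ along $P_i$ (when $j\ge 2$) and from $a$. I would then try to build the first block of length $k$ from the subpath $v^i_{j-k+1},\ldots,v^i_{j-1}$ of $P_i$, extending to the left into $A$ via the forest path whenever $j-k+1<1$, and the last block of length $l$ starting at $a$, using either another case-$2$ adjacency of $a$ in a different $P_{i'}$ or a backward forest arc from $a$ to a sufficiently deep ancestor $a_{m'}$ with $m'\ge k+1$. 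The genuinely delicate subcase is $j=k$, because the natural forward tail of $P_i$ beyond $v^i_k$ has length only $l-1$; the hypothesis $n\ge 7$ (so $k\ge 2$ and $l\ge 3$) together with the length-$(k+2)$ forest path should provide just enough slack to complete the construction and reach the desired contradiction with the $P(k,1,l)$-freeness of $D$.
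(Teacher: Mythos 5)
Your first case is correct and coincides with the paper's opening observation (a vertex at level $\ge k+1$ of a final outforest cannot receive an arc from $v^i_j$ with $j\le k-1$). The problem is the ``remaining case,'' which you leave as a sketch: it is not merely delicate, it cannot be completed with the resources you have set up. From $\chi(D[A])\ge k+3$ you extract only one root-to-leaf forest path of length $k+2$ whose top vertices have all their $P_i$-adjacencies as out-arcs into $\{v^i_1,\dots,v^i_k\}$, and that local configuration is perfectly consistent with $P(k,1,l)$-freeness: for instance $D[A]$ could induce a single directed path $a_1\to a_2\to\cdots\to a_{k+3}$ in which every $a_m$ has exactly one neighbour in $\bigcup_i P_i$, namely an out-arc to $v^1_k$. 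Here there is no second path $P_{i'}$ to use, no backward forest arc from $a$, and no $P(k,1,l)$ arises (any candidate pivot has out-neighbours only $a_{m+1}$ and $v^1_k$, and no length-$k$ forward path ends at either without reusing the pivot), so the ``slack'' you invoke does not exist. The underlying flaw is structural: $\chi(D[A])\ge k+3$ is not witnessed by the \emph{depth} of a final outforest (a long directed path is $2$-chromatic); it is witnessed by arcs of $D[A]$ that are not forest arcs among the deep vertices, and your argument never engages with those arcs.

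The paper avoids contradiction-by-depth altogether and builds an explicit $(k+2)$-colouring. After the observation you proved, so that every vertex of $A'=\bigcup_{i\ge k+1}L_i(F_A)$ has an out-neighbour in some $P_{i_{[v^i_1,v^i_k]}}$, it shows $E(A')=E(F'_A)$: a backward arc $(x,y)$ inside $A'$ is excluded using the out-neighbour $v_j$ of $x$ together with $P_{F_A}(y)$ (which has length $\ge k$ since $y\in A'$), and a forward non-forest arc $(x,y)$ is excluded using the forest parent $z=N^-_{F_A}(y)$, which again lies in $A'$ and hence has its own out-neighbour on $P_{i_{[v^i_1,v^i_k]}}$. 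Thus $A'$ is an outforest, hence bipartite, and gets $2$ colours, while the levels $1,\dots,k$ are stable and get $k$ colours, giving $\chi(D[A])\le k+2$ directly. If you want to keep your contradiction framing, the step you are missing is precisely this: assume the top part needs a third colour, deduce the existence of a non-forest arc within $A'$, and then run the two exclusion arguments above; the single-path analysis of $a_{k+1},a_{k+2},a_{k+3}$ is not enough.
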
 \begin{proof}
Let $F_{A}$ be a final spanning outforest of $D[A]$.\\ Let $A'=D[\displaystyle\bigcup_{i\geq k+1}L_{i}(F_{A})]$ and $F'_{A}$ be the sub-outforest of $F_{A}$ spanning the vertices of $A'$. For every $v\in A',$ $l(P_{F_{A}}(v))\geq k$, then for all $1\leq i\leq r-1$, $v$ is not an outneighbor of a vertex in $P_{i_{[v^{i}_{1},v_{k-1}^{i}]}}$, say $v_{j}$, otherwise $P_{F_{A}}(v)\cup (v_{j},v)\cup P_{i_{{[v_{j},v^{i}_{k+l-1}]}}}$ contains a $P(k,1,l)$, a contradiction. So, for every $v\in A' , v$ is an inneighbor of a vertex in $P_{i_{[v^{i}_{1},v_{k}^{i}]}}$ for some $1\leq i\leq r-1$.\\ $A'$ is a bipartite digraph, otherwise there exists $e\in E(A')\backslash E(F'_{A})$, so we study two cases:\\ Case 1: $e=(x,y)$ is backward with respect to $F_{A}$.\\We have $x\in A'$, so $x$ an inneighbor of a vertex in $ P_{i_{[v^{i}_{1},v_{k}^{i}]}}$, say $v_{j}$. Thus, $P_{F_{A}}(y) \cup (x,y)\cup (x,v_{j}) \cup P_{i_{[v_{j},v_{k+l-1}^{i}]}}$ contains a $P(k,1,l)$, a contradiction.\\ Case 2 : $e=(x,y)$ is forward with respect to $F_{A}$.\\ Let $z=N^{-}_{F_{A}}(y)$, we have $z\in A'$ and so it is an inneighbor of a vertex in $ P_{i_{[v^{i}_{1},v_{k}^{i}]}}$, say $v_{j}$. Thus, $P_{F_{A}}(x)\cup (x,y)\cup (z,y)\cup (z,v_{j})\cup P_{i_{[v_{j},v^{i}_{k+l-1}]}}$ contains a $P(k,1,l)$, a contradiction.\\ Therefore, $ E(A')=E(F'_{A})$. Thus, $A'$ is an outforest and consequently, a bipartite digraph which we partition into two stable sets $A_{1}$ and $A_{2}$. So $\chi(D[A])\leq k+2$. \end{proof}$\blacksquare$
\begin{claim} $\chi(D[B])\leq l+2$. \end{claim}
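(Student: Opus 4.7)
The plan is to mirror the proof of Claim~\ref{A} in the dual setting: I would replace the final spanning outforest by a final spanning inforest $F_B$ of $D[B]$ and swap the roles of the initial and final blocks of $P(k,1,l)$, so that a directed path of length $\geq l$ rising through $F_B$ now plays the dual role that $P_{F_A}(\cdot)$ played there. Since each $L_i(F_B)$ is stable, it suffices to show that $B':=D\bigl[\bigcup_{i\geq l+1} L_i(F_B)\bigr]$ is bipartite; the $l$ lower levels then supply $l$ color classes and $B'$ supplies $2$ more, for a total of $l+2$.

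First, I would argue that every $v\in B'$ has an in-neighbor on some $P_{i'}$ among $\{v_k^{i'},\ldots,v_{k+l-1}^{i'}\}$. Indeed, $P_{F_B}(v)$ is a directed path of length $\geq l$ starting at $v$; were $v$ instead an in-neighbor of some $v_j^{i'}$ with $j\in\{k+1,\ldots,k+l-1\}$ (i.e., $(v,v_j^{i'})\in E(D)$), then the concatenation $v_1^{i'}\to\cdots\to v_j^{i'}\leftarrow v\to P_{F_B}(v)$ would contain $P(k,1,l)$, contradicting the $P(k,1,l)$-freeness of $D$. The definition of $B$ then forces the claimed in-neighbor to exist.

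Next, I would show $E(B') = E(F_B)\cap E(B')$ by a case analysis on a putative non-forest arc $e=(x,y)$ of $B'$. If $e$ is backward with respect to $F_B$, then $l_{F_B}(x)<l_{F_B}(y)$ and, by finality, $x$ lies on $P_{F_B}(y)$; choosing an in-neighbor $v_j^{i'}$ of $y$ on some $P_{i'}$ and writing $x_1,\ldots,x_l$ for the first $l$ successors of $x$ on $P_{F_B}(x)$, the path
$$v_{j-k+1}^{i'}\to\cdots\to v_j^{i'}\to y\leftarrow x\to x_1\to\cdots\to x_l$$
realises $P(k,1,l)$. If instead $e$ is forward, then $l_{F_B}(x)>l_{F_B}(y)\geq l+1$ forces $l_{F_B}(x)\geq l+2$, so the unique out-neighbour $w$ of $x$ in $F_B$ also lies in $B'$ and supplies an in-neighbour $v_{j''}^{i''}$; writing $y_1,\ldots,y_{l-1}$ for the first $l-1$ successors of $y$ on $P_{F_B}(y)$, the path
$$v_{j''-k+1}^{i''}\to\cdots\to v_{j''}^{i''}\to w\leftarrow x\to y\to y_1\to\cdots\to y_{l-1}$$
is again a $P(k,1,l)$, producing a contradiction.

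The main obstacle is checking that the paths built above are simple, since the chord $e$ could in principle place $y$ among the successors of $x$ on $P_{F_B}(x)$, or $w$ among those of $y$. Strict level inequalities ($l_{F_B}(x_s)<l_{F_B}(x)<l_{F_B}(y)$ in the backward case, and the analogous chain $l_{F_B}(y_s)<l_{F_B}(y)<l_{F_B}(w)$ in the forward case), together with the assumption $e\notin F_B$, rule out these potential collisions. Once this is verified, $B'$ coincides with its spanning sub-inforest and is therefore bipartite, which completes the proof.
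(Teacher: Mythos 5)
Your proposal is correct and is essentially the paper's intended argument: the paper disposes of this claim by saying it is proved "in a similar way to the proof of Claim~\ref{A}," and your write-up is exactly that dualization (in-neighbor of each $v\in B'$ on $P_{i'[v_k^{i'},v_{k+l-1}^{i'}]}$ via the length-$\geq l$ path $P_{F_B}(v)$, then the backward/forward case analysis killing every non-forest arc of $B'$, giving $l$ level classes plus $2$ colors for the resulting inforest). One cosmetic point: in the forward case only $l_{F_B}(y)\leq l_{F_B}(w)$ is guaranteed, but the possible coincidence $w=y$ is already excluded by $e\notin F_B$, as you note, so the argument stands.
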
 \begin{proof} Let $F_{B} $ be a final spanning inforest of $D[B]$.\\ $\bullet$ Let $B'=D[\displaystyle\bigcup_{i\geq l+1}L_{i}(F_{B})]$ and $F'_{B}$ be the sub-inforest of $F_{B}$ spanning the vertices of $B'$. In a similar way to the proof of claim \ref{A}, we prove that $E(B')=E(F'_{B})$. Consequently, $B'$ is an inforest, which we partition into two stable sets $B_{1}$ and $B_{2}$.\end{proof}$\blacksquare$ \begin{claim} \label{c} $\chi(D[A_{1}\cup B'])\leq 2$.\end{claim}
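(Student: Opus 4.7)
The strategy is to prove the stronger structural statement that the underlying undirected graph of $D[A_{1}\cup B']$ is a forest; bipartiteness of a forest immediately gives $\chi(D[A_{1}\cup B'])\le 2$. Since $A_{1}$ is stable in $D$ (as one side of the bipartition of the outforest $A'$) and $B'$ is itself a forest (we already showed $E(B')=E(F'_{B})$), the combined graph will be a forest provided every vertex in $A_{1}$ contributes at most one cross-edge to $B'$, attached as a pendant. I would prove this via two intermediate facts, both by extracting a $P(k,1,l)$ from any violating configuration.

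First I would rule out all arcs from $B'$ to $A_{1}$. Suppose $(y,x)\in E(D)$ with $y\in B'$ and $x\in A_{1}$. Taking the last $k$ arcs of $P_{F_{A}}(x)$ (forward, ending at $x$), then the arc $(y,x)$ regarded as the unique backward arc of the desired pattern (with $y$ playing the role of the central source), and then the first $l$ arcs of $P_{F_{B}}(y)$ (forward, starting at $y$) yields a $P(k,1,l)$ in $D$. The three pieces are vertex-disjoint because $P_{F_{A}}(x)\subseteq A$, $\{y\}\cup P_{F_{B}}(y)\subseteq B$, and $A\cap B=\emptyset$, contradicting the $P(k,1,l)$-freeness of $D$.

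Next I would show that no $x\in A_{1}$ can have two distinct neighbours $y^{a},y^{b}\in B'$. By the previous step both cross-arcs must be $x\to y^{a}$ and $x\to y^{b}$. After possibly swapping the roles of $y^{a}$ and $y^{b}$, we may assume that $y^{a}$ is not a proper ancestor of $y^{b}$ in $F_{B}$. By the analogue for $B'$ of the argument used in Claim~\ref{A}, the vertex $y^{a}$ is an out-neighbour of some $v^{i'}_{j'}$ with $j'\ge k$. Then
\[
v^{i'}_{j'-k+1}\to\cdots\to v^{i'}_{j'}\to y^{a}\leftarrow x\to y^{b}\to y^{b}_{1}\to\cdots\to y^{b}_{l-1},
\]
where $y^{b}_{1},\dots,y^{b}_{l-1}$ are the first $l-1$ ancestors of $y^{b}$ along $P_{F_{B}}(y^{b})$, is a $P(k,1,l)$: $k$ forward arcs, the single backward arc $y^{a}\leftarrow x$, then $l$ forward arcs. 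Vertex-disjointness uses that $P_{i'}\cap(A\cup B)=\emptyset$, that $A\cap B=\emptyset$, that $y^{a}\ne y^{b}$, and that $y^{a}\notin\{y^{b}_{1},\dots,y^{b}_{l-1}\}$ (this last exclusion being precisely what the non-ancestor assumption guarantees). This contradicts $P(k,1,l)$-freeness.

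Combining the two facts, each vertex of $A_{1}$ is attached to $B'$ by at most one pendant edge, so the underlying undirected graph of $D[A_{1}\cup B']$ is a forest, hence bipartite, which gives the claimed bound. The main technical obstacle is the second step: one must check that the asymmetric labelling by ancestor relation can always be arranged and that the $l-1$ chosen ancestors actually lie on $P_{F_{B}}(y^{b})$ — the latter following from $y^{b}\in B'$ having $F_{B}$-level at least $l+1$, so $P_{F_{B}}(y^{b})$ has length at least $l$.
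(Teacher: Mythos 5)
Your proposal is correct and takes essentially the same route as the paper's proof: you rule out all arcs from $B'$ to $A_{1}$ exactly as the paper does, and you show each vertex of $A_{1}$ has at most one (out-)neighbour in $B'$ by building a $P(k,1,l)$ from the fact that every vertex of $B'$ is an out-neighbour of some $v^{i}_{j}$ with $j\geq k$, your ancestor-based relabelling playing the same disjointness role as the paper's assumption $l_{F_{B}}(u_{1})\leq l_{F_{B}}(u_{2})$. The only cosmetic difference is that you package the conclusion as ``stable set plus forest plus pendant edges is a forest, hence bipartite,'' where the paper directly combines $\chi(B')\leq 2$ with the at-most-one-neighbour property.
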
 \label{88} \begin{proof}
 We have $ \chi(B')\leq 2$. 
  Let $v\in A_{1}, (u,v) \notin E(D)$ for every $u\in B'$, since otherwise $P_{F_{A}}(v)\cup (u,v)\cup P_{F_{B}}(u)$ contains a $P(k,1,l)$, a contradiction.\\ Now we are going to prove that $v$ has at most one outneighbor in $B'$. Suppose that $v$ has two outneighbors $u_{1}$ and $u_{2}$ in $B'$. Without loss of generality, suppose that $l_{F_{B}}(u_{1})\leq l_{F_{B}}(u_{2})$. We have $ u_{2}\in B'$, then $u_{2}$ is not an inneighbor of a vertex in $ P_{i_{[v_{k+1}^{i},v^{i}_{k+l-1}]}}$, say $v_{j}$, since otherwise $P_{i_{[v^{i}_{1},v_{j}]}}\cup (u_{2},v_{j})\cup P_{F_{B}}(u_{2})$ contains a $P(k,1,l)$, a contradiction. Thus, $u_{2}$ is an outneighbor of a vertex in $P_{{i}_{[v_{k}^{i},v^{i}_{k+l-1}]}}$, say $v_{j}$. But $P_{i_{[v^{i}_{1},v_{j}]}}\cup (v_{j},u_{2}) \cup (v,u_{2})\cup (v,u_{1})\cup P_{F_{B}}(u_{1})$ contains a $P(k,1,l)$, a contradiction.\\ So $v$ has at most one neighbor in $B'$ and consequently, $\chi(D[A_{1}\cup B'])\leq 2$.\end{proof} $\blacksquare$ \begin{claim} Let $F_{r}$ be a final spanning outforest of $D_{r}$. If $ L_{k+l-2}(D_{r})\neq \emptyset$, then $\chi(D[B'\cup A_{1}\cup  L_{k+l-2}(D_{r})])\leq 2$  \end{claim}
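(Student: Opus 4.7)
The plan is to extend the proper $2$-coloring of $A_1\cup B'$ furnished by Claim~\ref{c} to all of $L\cup A_1\cup B'$, where I write $L:=L_{k+l-2}(D_r)$. To begin, $L$ is stable because $F_r$ is a final spanning outforest, so each of its level sets is stable. Moreover, for every $w\in L$ the path $P_{F_r}(w)$ is a forward directed path of length $k+l-3\ge k$ (using $l\ge 3$) ending at $w$, so its terminal sub-path of length $k$ is a forward path of length exactly $k$ ending at $w$. Recall also, from the proofs of Claim~\ref{A} and Claim~\ref{c}, that every $v\in A_1$ is an in-neighbour of some $v^i_j$ with $j\le k$ (so $v\to v^i_j\to\cdots\to v^i_{k+l-1}$ is a forward directed path of length $k+l-j\ge l$ starting at $v$), while every $v\in B'$ is an out-neighbour of some $v^i_j$ with $j\ge k$ (so $v^i_{j-k+1}\to\cdots\to v^i_j\to v$ is a forward directed path of length $k$ ending at $v$).

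The core of the proof is then to show that every $w\in L$ is a leaf in the underlying graph of $D[L\cup A_1\cup B']$. To forbid an arc $(v,w)$ with $v\in B'$, one concatenates the suffix of $P_{F_r}(w)$ of length $k$, the backward arc $(v,w)$, and $P_{F_B}(v)$; this yields a $P(k,1,l)$ in $D$, the three pieces being pairwise vertex-disjoint because $V(D_r)\cap N=\emptyset$. The analogous construction with the third block taken along the $P_i$-path starting at $v$ (rather than along $F_B$) forbids arcs $(v,w)$ with $v\in A_1$. To forbid $w$ having two distinct out-neighbours $v_1,v_2\in A_1\cup B'$, one takes $w$ to be the valley of the middle block: the first block is a forward path of length $k$ ending at $v_1$ (via $P_{F_A}$ if $v_1\in A_1$, via a $P_i$-path if $v_1\in B'$), the middle arc is $(w,v_1)$, and the third block is $w\to v_2$ extended by the forward path of length $\ge l$ starting at $v_2$. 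When $v_1,v_2$ lie in different classes the three pieces lie in disjoint subsets of $V(D)$; when they lie in the same class, any potential overlap occurs only when one of them is an ancestor of the other in $F_A$ or $F_B$, in which case swapping the roles of $v_1$ and $v_2$ (by antisymmetry of ancestry) eliminates it.

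Once this leaf property is established, every cycle of the underlying graph of $D[L\cup A_1\cup B']$ is contained in $A_1\cup B'$, which is bipartite by Claim~\ref{c}. Hence $D[L\cup A_1\cup B']$ is itself bipartite, and therefore $\chi(D[L\cup A_1\cup B'])\le 2$.

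The main obstacle I anticipate is the two-out-neighbour step, where producing a $P(k,1,l)$ with pairwise distinct vertices demands some care in the two sub-cases where both out-neighbours lie in the same class and are related by ancestry in $F_A$ or $F_B$. The resolution is that ancestry is antisymmetric, so at least one of the two possible orderings of the out-neighbours avoids the overlap between the first and third blocks of the candidate $P(k,1,l)$.
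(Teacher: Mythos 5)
Your proposal follows the same route as the paper's own proof: show that each vertex $w$ of the level set $L_{k+l-2}(D_r)$ has no in-neighbour in $A_1\cup B'$ and at most one out-neighbour there, use stability of the level set (final outforest) together with Claim~\ref{c}, and conclude $\chi\le 2$; the individual $P(k,1,l)$ constructions are the paper's, and your resolution of the same-class overlap ``by antisymmetry of ancestry'' is the paper's ``without loss of generality $l_{F_B}(x)\le l_{F_B}(y)$'' in different words. One assertion, however, is not correct as written: in the mixed case you claim that whenever $v_1,v_2$ lie in different classes the three pieces are automatically disjoint. That is true only for the assignment $v_1\in A_1$ (first block along $F_A$) and $v_2\in B'$ (third block along $F_B$); with the opposite assignment $v_1\in B'$, $v_2\in A_1$ your recipe routes \emph{both} the first block (a segment $v^i_{j_1-k+1}\cdots v^i_{j_1}v_1$ with $j_1\ge k$) and the third block (through some $v^{i}_{j_2}$ with $j_2\le k$) along possibly the same path $P_i$, and these segments can share vertices (for instance $j_1=k$ forces the first block to cover $v^i_1,\dots,v^i_k$, which meets any choice of $j_2\le k$), so no $P(k,1,l)$ is exhibited and ``disjoint subsets'' fails. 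The fix is exactly the paper's choice: in the mixed case always place the $A_1$-neighbour at the end of the first block via $P_{F_A}$ and the $B'$-neighbour in the third block via $P_{F_B}$. With that one-line correction your argument is complete and coincides with the paper's.
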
 \label{55}\begin{proof} 
Let $v\in L_{k+l-2}(D_{r})$. We have $l\geq 3 \Rightarrow l_{F_{D_{r}}}(v)\geq k+1$, then:\\
 
$\bullet v $ has no inneighbor in $B'$ since otherwise, $P_{F_{D_{r}}}(v)\cup (u,v) \cup P_{F_{B}}(u)$ contains a $P(k,1,l)$ where $u$ is an inneighbor of $v$ in $B'$, a contradiction.\\

 $\bullet v$ has no inneighbor in $A'$, say $u$ since otherwise, $P_{F_{D_{r}}}(v)\cup(u,v)\cup (u,v_{j})\cup P_{i_{[v_{j},v_{k+l-1}^{i}]}}$ contains a $P(k,1,l)$ where $v_{j}$ is an outneighbor of $u$ in $P_{i_{[v_{1}^{i},v_{k}^{i}]}}$, a contradiction.\\ Now we are going to prove that $v$ has at most one outneighbor in $B'\cup A_{1}$. Suppose that $v$ has two outneighbors $x$ and $y$ in $B'\cup A_{1}$: \\ If $x$ and $y$ belong to $B'$, then suppose, without loss of generality, that $l_{F_{B}}(x)\leq l_{F_{B}}(y)$.\\ We have $ y\in B'$, then $y$ is an outneighbor of a vertex in $P_{{i}_{[v_{k}^{i},v_{k+l-1}^{i}]}}$, say $v_{j}$. Thus, $P_{i_{[v_{1}^{i},v_{j}]}}\cup (v_{j},y) \cup (v,y)\cup (v,x)\cup P_{F_{B}}(x)$ contains a $P(k,1,l)$, a contradiction.\\If $x$ and $y$ belong to $A_{1}$, then suppose, without loss of generality, that $l_{F_{A}}(x)\leq l_{F_{A}}(y)$. We have $y \in A_{1}$, then $y$ is an inneighbor of a vertex in $P_{i_{[v_{1}^{i},v_{k}^{i}]}}$, say $v_{j}$. Thus, $P_{F_{A}}(x)\cup (v,x)\cup (v,y)\cup(y,v_{j})\cup P_{i_{[v_{j},v_{k+l-1}^{i}]}}$ contains a $P(k,1,l)$, a contradiction. \\ If $x\in A_{1}$ and $y\in B'$, then we have $P_{F_{A}}(x)\cup (v,x) \cup (v,y)\cup P_{F_{B}}(y)$ contains a $P(k,1,l)$, a contradiction.\\ Therefore $v$ has at most one neighbor in $B'\cup A_{1}$. By claim 2.4, we have  $\chi(B'\cup A_{1})\leq 2$, so $\chi(D[B'\cup A_{1}\cup  L_{k+l-2}(D_{r})])\leq 2$. $\blacksquare$ \end{proof}\\ \\
 Now we are ready to continue the proof of Theorem \ref{m}:\\ By definition, $A$ has neighbors only in $\displaystyle{\bigcup _{i=1}^{r-1}} P_{i_{[v^{i}_{1},v_{k}^{i}]}}$. So, we choose the coloring of $D'$ such that the colors of $V(\displaystyle{\bigcup _{i=1}^{r-1}} P_{i_{[v^{i}_{1},v_{k}^{i}]}})$ are picked from the set $\{1,2,\dots ,k\}$, then the colors $\{k+1,\dots k+l-1\}$ are not used in the coloring of $N_{\{P_{1}\cup \dots P_{r-1}\}}(A)$.\\ 
 $D_{r}$ contains no path of length $ k+l-2$, so $\chi(D_{r})\leq k+l-2$. Recall that $F_{r}$ is a final spanning outforest of $D_{r}$, we color $L_{i}(F_{D_{r}})$ by the color $i$ for every $i$.\\ If $\chi(D_{r})\leq k+l-3$, then we color $D_{r}$ by the set $\{1,\dots,k+l-3\}$.\\ Otherwise, $ L_{k+l-2}(D_{r})\neq \emptyset$. We have by claim $2.5$, $\chi(D[B'\cup A_{1}\cup  L_{k+l-2}(D_{r})])\leq 2$, so we can recolor $L_{k+l-2}(D_{r})$ by the set of colors given to $B'$.\\ Thus, in both cases, the colors $\{k+l-2,k+l-1\}$ are not used in the coloring of $D_{r}$.\\ Now we conclude that the colors $\{k+l-2,k+l-1\}$ are not used in the coloring of $N_{D'}(A)$, so we color $L_{1}(F_{A})$ and $A_{2}$ by these two colors.\\
Finally, $\chi(D)\leq \chi( L_{1}(F_{A})\cup A_{2}\cup D'-(  L_{k+l-2}(D_{r})))+\chi(B\cup A_{1}\cup  L_{k+l-2}(D_{r}))+\chi(A-(A_{1}\cup A_{2}\cup L_{1}(F_{A})))$\\ $\leq (n-3)+(l+2)+(k-1)=n-3+n-1=2n-4$, a contradiction.$\blacksquare$ \end{proof}

\section{The existence of the path $P(k,l,r)$ in digraphs with chromatic number bounded from above} \label{3}
It is worthy to note that the case when $k=1$ or $r=1$ is treated in \cite{dm}, in which D. Al Mniny proved that any $(2n-2)$-chromatic digraph contains the path $P(k,l,1)$ of length $n-1$. Moreover, the case when $l=1$ is treated in Theorem \ref{m} in this paper.
Thus, in what follows, we suppose that $1\notin \{k,l,r\}$.

\begin{theorem} \label{new3}
Let $D$ be a $P(k,l,r)$-free digraph with $k+l+r=n-1$ and $k\leq r$ , then \begin{equation}
\chi(D)\leq \left\{
\begin{array}{rcl}
 2(n-1)+r \mbox{     if }  l\leq k \leq r \\ \\
 2(n-1)+l+r-k\mbox{ if } k\leq l \leq r\\ \\
 2(n+l-1)-k \mbox{ if } k\leq r \leq l
 
\end{array}\right.
\end{equation}

\end{theorem}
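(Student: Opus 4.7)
The plan is to extend the extraction-and-partition technique of Theorem~\ref{m} from the case $P(k,1,l)$ to paths with a middle block of length $l \geq 2$. As in that proof, I would build a nested sequence $D_1 = D \supseteq D_2 \supseteq \cdots \supseteq D_r$ by removing at each step a directed path $P_i$ together with its neighbor set $N(P_i)$, halting when $D_r$ contains no directed path of the chosen length. The length of each $P_i$ is tuned per case so that, by the Gallai--Roy theorem, $\chi(D_r)$ is controlled while $\chi(D[P_i])$ stays within a palette of size comparable to $k+l+r$. Let $D' = D[\bigcup_{i<r} V(P_i) \cup V(D_r)]$ and $N = \bigcup_i N(P_i)$, exactly as before.

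Next, the neighbor union $N$ is to be partitioned according to where on each $P_i$ the neighbors attach. Since $P(k,l,r)$ has three blocks of lengths $k$, $l$, $r$, the natural pivots on each $P_i$ are its $k$-th and $(k+l)$-th vertices, which split $P_i$ into an initial, a middle, and a final segment. This yields a partition of $N$ into three classes, and for each class I would bound the chromatic number via a final spanning outforest or inforest, in the spirit of Claim~\ref{A}. The key technical claim in each case is that vertices whose level exceeds the relevant block length must induce an outforest (respectively inforest) on the corresponding class, hence are bipartite: any extra arc can be combined with a path in the forest and a segment of some $P_i$ to build a copy of $P(k,l,r)$, contradicting the hypothesis.

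Once each piece of $N$ is shown to be $2$-colorable at its high levels, the final step is to combine colorings. Colors used on $D'$ can be recycled on $N$ because each class of $N$ attaches only to a prescribed segment of $P_i$, so certain colors from the palette of $D'$ are forbidden from appearing on that segment and are freed up for the neighbors. Summing the resulting chromatic budgets, with the case-dependent choice of path length, should produce the three bounds $2(n-1)+r$, $2(n-1)+l+r-k$ and $2(n+l-1)-k$ claimed in the statement.

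The main obstacle is the rogue-edge analysis for the middle piece of $N$: with $l \geq 2$, the backward block of an embedded $P(k,l,r)$ can be glued to $P_i$ at any of several arc-positions inside the middle segment, fanning the essentially single subcase of Theorem~\ref{m} into a heavier case analysis that also has to track whether the rogue arc is forward or backward with respect to the forest. The split into three parameter regimes reflects which of the three pieces of $N$ dominates the chromatic budget, and I expect the regime $k \leq r \leq l$, where the middle segment is longest and therefore the rogue-edge analysis richest, to demand the most intricate bookkeeping and to motivate the slightly worse bound in that case.
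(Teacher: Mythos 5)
Your scaffolding matches the paper's: extract paths $P_i$ with their neighborhoods, let $D'$ be the union of the $P_i$ and the residual digraph, partition $N=\bigcup_i N(P_i)$ by attachment position, and color via final spanning forests with color recycling. But the one technical claim you actually commit to is wrong for $l\geq 2$, and it is precisely the point where the middle block of length $l$ changes the argument. You assert that the high-level part of each class "must induce an outforest (respectively inforest), hence is bipartite." That is the Theorem~\ref{m} argument, and it works there only because the middle block has length $1$: a single rogue arc $(x,y)$ plus one attachment arc to $P_i$ already supplies the backward block. For general $l$, the backward block must be realized by descending $l$ arcs along the forest path $P_{F_A}(v)$ before re-attaching to some $P_i$; if the rogue arc joins levels $i_0$ and $i_0+d$ with $d\leq l$, the descent of length $l$ from $v$ re-enters the levels occupied by $P_{F_A}(u)$ and the two root-paths need not be disjoint, so no copy of $P(k,l,r)$ is forced. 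Such arcs can genuinely survive, so $A'$ need not be a forest and need not be bipartite. The paper's fix is to prove only that the level classes $A_i=\bigcup_{j\geq 0}L_{i+j(l+1)}(F_A)$ are stable (levels differing by a multiple of $l+1$ guarantee the disjointness of the two forest paths), which gives $\chi(A')\leq l+1$ rather than $2$, and similarly $\chi(B')\leq k+1$ and $\chi(C'\cup H')\leq r+1$ or $l+1$. These $+1$-per-block-length costs are exactly the source of the case-dependent excesses $r$, $l+r-k$, $2l-k$ over $2(n-1)$ in the statement; a bipartiteness argument, if it worked, would prove a strictly stronger bound, which should have been a warning sign.

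Two secondary mismatches: the paper uses one uniform extraction length $k+r-2$ in all three cases (the case split only affects the level thresholds defining $A',B',C',H'$), not a length "tuned per case"; and $N$ is partitioned into \emph{four} classes $A,B,C,H$ according to whether a vertex is an in-neighbor or an out-neighbor of the initial segment $P_i[v_1^i,v_k^i]$ or of the final segment $P_i[v_k^i,v_{k+r-1}^i]$ --- there is no middle segment at position $k+l$ (the extracted paths have only $k+r-1$ vertices), and the in/out distinction is essential because it dictates whether the class is handled by a final outforest or a final inforest. As written, your proposal identifies the middle-block case analysis as "the main obstacle" but does not supply the idea (stability of levels in arithmetic progression modulo the block length plus one) that overcomes it, so the proof is not complete.
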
 
\begin{proof}
Let: \begin{itemize}
\item $D_{1}=D$ 
 \item $D_{i+1}=D_{i}-(P_{i} \cup N(P_{i}))$ for every $1\leq i\leq h-1$, where :\begin{itemize} \item $P_{i}$ is a directed path of length $k+r-2$ in $D_{i}$. \item$N(P_{i})=\{v \in D_{i}-P_{i};$ $v$ has a neighbor in $P_{i}$\}.\item $h$ is the minimal value satisfying the fact that $D_{h}$ contains no directed path of length $k+r-2$.\end{itemize} \end{itemize}
 Let $D'=D[ \displaystyle \bigcup _{i=1}^{h-1} V(P_{i})\cup D_{h}]$  and  $N=\displaystyle \bigcup _{i=1}^{h-1} N(P_{i})$.\\ By definition, we have: \begin{itemize} \item $v(P_{i})=k+r-1$, then $\chi(D[V(P_{i})])\leq k+r-1$ for every $1\leq i\leq h-1$.\item $D_{h}$ contains no directed path of length $k+r-2$, then $\chi(D_{h})\leq k+r-2$. \item$ P_{i+1}\in D_{i+1}=D_{i}-(P_{i}\cup N(P_{i}))$, then $uv\notin E(G[D])$ for every $u\in P_{i}$ and $v\in P_{j}$ with $i\neq j$ for all $1\leq i,j\leq h-1$, similarly for every $u\in D_{h}$ and $v\in P_{i}$.\end{itemize} So, $\chi(D')\leq k+r-1$. Color $D'$ by $\{1,2,...,k+r-1\}$.\\
Let $P_{i}=v^{i}_{1}\dots v^{i}_{k+r-1}$, and let $A,B,C, and$ $H $ be four subsets covering $N$ such that:\\
$H=\displaystyle \bigcup_{i} N^{+}({P_{i}}_{[v_{k}^{i} ,v_{k+r-1}^{i}]})$\\ $C=\displaystyle \bigcup_{i} N^{-}({P_{i}}_{[v_{k+1}^{i} ,v_{k+r-1}^{i}]})\cap (N-H)$\\$B=\displaystyle \bigcup_{i} N^{+}({P_{i}}_{[v_{1}^{i} ,v_{k-1}^{i}]})\cap (N-(H\cup C))$\\ $A=N-(H\cup C\cup B)$, $i.e.$ $A\subseteq \displaystyle \bigcup_{i} N^{-}({P_{i}}_{[v_{1}^{i} ,v_{k}^{i}]}) $.\\
Now let $F_{A}$ and $ F_{B}$ be final spanning outforests of $A$ and $B$ respectively. Let $F_{C}$ and $ F_{H}$ be final spanning inforests of $C$ and $H$ respectively. Suppose, without loss of generality, that $k\leq r$. Then it is enough to study the following three cases:\\ Case 1 : $l\leq k \leq r$.\\
Let $A'=\displaystyle \bigcup_{i\geq k+1}L_{i}(F_{A})$ and $B'=\displaystyle \bigcup_{i\geq l} L_{i}(F_{B})$.\\ We have $uv \notin E(G[D])$ for every  $u\in A'$ and $v\in B'$ since otherwise, suppose without loss of generality, that $(u,v)\in E(D)$, then $P_{F_{A}}(u)\cup (u,v) \cup P_{F_{B}}(v)_{[z,v]}\cup (v_{j},z) \cup {P_{i}}_{[v_{j},{v^{i}}_{k+r-1}]}$ contains a $P(k,l,r)$ where $P_{F_{B}}(v)_{[z,v]}$ is the subpath of $P_{v}(F_{B})$ of length $l-1$ and $v_{j}$ is an inneighbor of $z$ on ${P_{i}}_{[v_{1}^{i},v_{k-1}^{i}]}$ for some $i$.\\ \begin{claim}$\chi(A')\leq l+1$.\end{claim} \begin{proof} We have $A_{i}=\{\displaystyle \bigcup _{j\geq 0} L_{i+j(l+1)}\}$ is stable for every $i\geq k+1$, since otherwise let $uv\in E(G[D])$ such that $L_{F_{A}}(u)=i_{0}$ and $L_{F_{A}}(v)=i_{0}+j_{0}(l+1)$ for some $i_{0},j_{0}\in \mathbb{N^{*}}$ with $i_{0}\geq k+1$. Suppose, without loss of generality, that $(u,v)\in E(D)$. We have $P_{F_{A}}(u)\cup (u,v)\cup {P_{F_{A}}(v)}_{[z,v]}\cup (z,v_{j}) \cup {P_{i}}_{[v_{j},{v^{i}}_{k+r-1}]}$ contains a $P(k,l,r)$ where $P_{F_{A}}(v)_{[z,v]}$ is the subpath of $P_{F_{A}}(v)$ of length $l$ and $v_{j}$ is an outneighbor of $z$ in ${P_{i}}_{[v_{1}^{i},v_{k}^{i}]}$ for some $i$.\end{proof}\begin{claim} $\chi(B')\leq k+1$. \end{claim} \begin{proof} We define $B_{i}=\{\displaystyle \bigcup _{j\geq 0} L_{i+j(k+1)}\}$ for every $i\geq l$. Similarly to the proof of claim 3.2, we prove that $\chi(B')\leq k+1$. \end{proof}$\blacksquare$\\ \\We conclude, from claim 3.2 and claim 3.3, that $\chi(A'\cup B')\leq k+1$.\\ Therefore, $\chi(A\cup B)\leq \chi(A'\cup B')+\chi(A-A')+\chi(B-B')\leq k+1+k+l-1=k+k+l$.\\ \begin{claim} Let $F_{h}$ be a final spanning outforest of $D_{h}$. Then, $uv\notin E(G[D])$ for all $u\in A'\cup B'$ and $v\in \displaystyle \bigcup_{i\geq k+1} L_{i}(F_{h})$. \end{claim} \begin{proof} Let $uv \in E(G[D])$ with $u\in A'\cup B'$ and $v\in \displaystyle  \bigcup_{i\geq k+1} L_{i}(F_{h})$. Suppose, without loss of generality, that $(u,v)\in E(D)$ and $u\in A'$. We have $P_{F_{h}}(v)\cup (u,v) \cup {P_{F_{A}}(u)}_{[z,u]}\cup (z,v_{j})\cup {P_{i}}_{[v_{j},v_{k+r-1}^{i}]}$ contains a $P(k,l,r)$ where $z$ and $v_{j}$ are defined as above, a contradiction. \end{proof}$\blacksquare$\\ Now we are ready to continue the proof of Theorem $3.1$:\\ We have, by definition, $A$ and $B$ has neighbors only in the set $V(\displaystyle \bigcup_{i\geq 1} {P_{i}}_{[v_{1}^{i},v_{k}^{i}]})$. We choose the coloring of $D'$ such that the colors of $V(\displaystyle \bigcup _{i\geq 1} {P_{i}}_{[v_{1}^{i},v_{k}^{i}]})$ are picked from the set of colors $\{1,\dots ,k\}$. Then the colors $\{k+1,\dots ,k+r-1\}$ are not used in coloring of $N_{\{P_{1}\cup \dots \cup P_{h-1}\}}(A\cup B)$.\\ Concerning $D_{h}$, we give $L_{i} (F_{h})$ the color $i$ for every $i$. By claim 3.4, we can give the colors $\{k+1,\dots ,k+r-1\}$ to $A'\cup B'$. We get that $\chi(D'\cup A\cup B)\leq \chi(D'\cup A'\cup B')+\chi(A-A')+\chi(B-B')\leq k+r-1+2+k+l-1=k+k+r+l$.\\ Concerning $C$ and $H$, let $C'=\displaystyle \bigcup _{i\geq l}L_{i}(F_{C})$ and $H'=\displaystyle \bigcup _{i\geq r+1} L_{i}(F_{H})$. Similarly we prove that $uv\notin E(G[D])$ for all $u\in C'$, and $v\in H'$ and that $\chi(C'\cup H')\leq r+1$. Thus, $\chi(C\cup H)\leq l-1+r+r+1=l+r+r$.\\ Therefore, $\chi(D)\leq k+k+r+l+l+r+r=n-1+n-1+r=2n-2+r=2(n-1)+r$.\\ \\ Case 2: $k\leq l\leq r$. Let $A'=\displaystyle \bigcup _{i\geq l+1}L_{i}(F_{A})$, $B'=\displaystyle \bigcup_{i\geq l}L_{i}(F_{B})$, $C'=\displaystyle \bigcup_{i\geq l}L_{i}(F_{C})$, $H'=\displaystyle \bigcup_{i\geq r+1}L_{i}(F_{H})$, and let $F_{h}$ be a final spanning outforest of $D_{h}$. We follow a similar procedure to that followed in the previous case to obtain the following: $\chi(A'\cup B')\leq k+1$, $\chi(A\cup B)\leq k+1+l+l-1=k+l+l,$ and $\chi(D'\cup A\cup B)\leq k+r-1+2+l+l-1=k+r+l+l$.\\ $\chi(C'\cup H')\leq r+1$, and $\chi(C\cup H)\leq l+r+r$. So, $\chi(D)\leq k+r+l+l+r+r+l=n-1+n-1+r-k+1=2n-2+l+r-k=2n-2+l+r-k=2(n-1)+l+r-k$.\\ \\
Case 3: $k\leq r\leq l$. Let $A'=\displaystyle \bigcup _{i\geq l+1}L_{i}(F_{A})$, $B'=\displaystyle \bigcup_{i\geq l}L_{i}(F_{B})$, $C'=\displaystyle \bigcup_{i\geq l}L_{i}(F_{C})$, $H'=\displaystyle \bigcup_{i\geq r+1}L_{i}(F_{H})$, and let $F_{h}$ be a final spanning outforest of $D_{h}$. We follow a similar procedure to the case $1$ and we get the following: $\chi(A'\cup B')\leq k+1$, $\chi(A\cup B)\leq k+1+l+l-1=k+l+l,$ and $\chi(D'\cup A\cup B)\leq k+r-1+2+l+l-1=k+r+l+l$.\\ $\chi(C'\cup H')\leq l+1$, and $\chi(C\cup H)\leq l+l+r$. So, $\chi(D)\leq k+r+l+l +l+l+r=n-1+n-1-k+l+l=2(n+l-1)-k$.
\end{proof} \\ \\ \\

\paragraph{Acknowledgment.} The author would like to acknowledge the National Council for Scientific Research of Lebanon (CNRS-L) and the Agence Universitaire de la Francophonie in cooperation with Lebanese University for granting her a doctoral fellowship.\\ \\

\textbf{Declarations.}\\ \textbf{Conflict of interest} The authors declare that they have no known competing financial interests or personal relationships that could have appeared to influence the work reported in this paper.\\ \textbf{Data sharing} Data sharing not applicable to thus article as no datasets were generated or analysed during the current study.

\end{document}